\newtheorem{theorem}{Theorem}[section]
\newtheorem{corollary}[theorem] {Corollary}
\newtheorem{definition}[theorem]{Definition}
\newtheorem{example}[theorem]{Example}
\newtheorem{lemma} [theorem]{Lemma}
\newtheorem{proposition}[theorem]{Proposition}
\newtheorem{remark}[theorem]{Remark}
\begin{document}

\label{'ubf'}  
\setcounter{page}{1}                                 %Put here the starting page number

\markboth {\hspace*{-9mm} \centerline{\footnotesize \sc
	% Put here the left page top label 
	\scriptsize {A note on connectivity preserving splitting operation for matroids representable over $GF(p)$}}
}
{ \centerline                           {\footnotesize \sc  
	%put here the author's name
	Prashant Malavadkar$^1,$ Sachin Gunjal$^2$ and Uday Jagadale$^3$                                                  } \hspace*{-9mm}              
}

\begin{center}
{ 
	{\Large \textbf { \sc 	A note on connectivity preserving splitting operation for matroids representable over $GF(p)$
		}
	}
	\\

	\medskip

	{\sc Prashant Malavadkar$^1,$ Sachin Gunjal$^2$ and Uday Jagadale$^3$  }\\
	{\footnotesize  School of Mathematics and Statistics, MIT World Peace University,
		Pune 411 038,	India. }\\
	{\footnotesize e-mail: {\it 1. prashant.malavadkar@mitwpu.edu.in, 2. sachin.gunjal@mitwpu.edu.in,\\ 3. uday.jagdale@mitwpu.edu.in   }}
}
\end{center}

\thispagestyle{empty}

\hrulefill

\begin{abstract}  
{\footnotesize 
\noindent The splitting operation on a $p$-matroid does not necessarily preserve connectivity. It is observed that there exists a single element extension of the splitting matroid which is connected. In this paper, we define the element splitting operation on a $p$-matroids which is a splitting operation followed by a single element extension. It is proved that the element splitting operation on connected $p$-matroid yields a connected $p$-matroid. We give a sufficient condition to yield Eulerian $p$-matroids from Eulerian $p$-matroids under the element splitting operation. A sufficient condition to obtain hamiltonian $p$-matroid by applying the element splitting operation on $p$-matroid is also provided.
	
}
\end{abstract}
\hrulefill

{\small \textbf{Keywords:} $p$-matroid; element splitting operation; Eulerian matroid; connected matroid; hamiltonian matroid; elementary lift.  }

\indent {\small {\bf AMS Subject Classification:} 05B35; 05C50; 05C83 }

\section{Introduction}

\noindent We discuss loopless and coloopless $p$-matroids, by $p$-matroid we mean a vector matroid $M\cong M[A]$ for some matrix $A$ of size $m \times n$ over the field $F = GF(p),$ for prime $p$. We denote the set of column labels of $M$ (viz. the ground set of $M$) by $E$, the set of circuits of $M$ by $\mathcal {C}(M),$ and the set of independent sets of $M$ by $\mathcal {I}(M).$ For undefined, standard terminology in graphs and matroids, see Oxley \cite{ox}.\\

\noindent Malavadkar et al. \cite{mjg} defined the splitting operation for $p$-matroids as :
\begin{definition}\label{def0}
	Let $M\cong M[A]$ be a $p$-matroid on the ground set $E,$$\{a,b\} \subset E,$ and $\alpha\neq 0$ in $ GF(p)$. The matrix $A_{a,b}$ is constructed  from $A$ by appending an extra row to $A$ which has coordinates equal to $\alpha$ in the columns corresponding to the elements $a,$$b,$ and zero elsewhere. Define the splitting matroid $M_{a,b}$ to be the vector matroid  $M[A_{a,b}].$ The transformation of $M$ to $M_{a,b}$ is called the splitting operation.
\end{definition}
\noindent A circuit $C \in \mathcal{C}(M)$ containing $\{a,b\}$ is said to be a $p$-circuit of $M,$ if $C \in \mathcal{C}(M_{a,b}).$ And if $C$ is a circuit of $M$ containing either $a$ or $b,$ but it is not a circuit of $M_{a,b},$ then we say $C$ is an $np$-circuit of $M.$ For $a,b\in E,$ if the matroid $M$ contains no $np$-circuit then splitting operation on $M$ with respect to $a,b$ is called trivial splitting. \\ 
Note that the class of connected $p$-matroids is not closed under splitting operation. 
\begin{example}\label{ex2}
	The vector matroid $M \cong M[A]$ represented by the matrix $A$ over the field $GF(3)$ is connected, whereas the splitting matroid $M_{1,4}\cong M[A_{1,4}] $ is not connected.
	
	\begin{center}
		$\mathbf{A} = 
		\begin{pNiceMatrix}%
			[first-col,
			first-row,
			code-for-first-col = \color{black},
			code-for-first-row = \color{black}]
			& 1 & 2 & 3 & 4 & 5 & 6 & 7 & 8 \\
			& 1 & 0 & 0 & 0 & 0 & 1 & 1 & 2 \\
			& 0 & 1 & 0 & 0 & 1 & 0 & 1 & 1 \\
			& 0 & 0 & 1 & 0 & 1 & 1 & 0 & 1 \\
			& 0 & 0 & 0 & 1 & 2 & 1 & 1 & 0 \\
			& 0 & 1 & 1 & 0 & 0 & 0 & 0 & 0
		\end{pNiceMatrix} \qquad
		\mathbf{A_{1,4}} = 
		\begin{pNiceMatrix}%
			[first-col,
			first-row,
			code-for-first-col = \color{black},
			code-for-first-row = \color{black}]
			& 1 & 2 & 3 & 4 & 5 & 6 & 7 & 8 \\
			& 1 & 0 & 0 & 0 & 0 & 1 & 1 & 2 \\
			& 0 & 1 & 0 & 0 & 1 & 0 & 1 & 1 \\
			& 0 & 0 & 1 & 0 & 1 & 1 & 0 & 1 \\
			& 0 & 0 & 0 & 1 & 2 & 1 & 1 & 0 \\
			& 0 & 1 & 1 & 0 & 0 & 0 & 0 & 0 \\
			& 1 & 0 & 0 & 1 & 0 & 0 & 0 & 0	
		\end{pNiceMatrix}$
	\end{center}
	 
\end{example}

 It is interesting to see that the vector matroid $M'_{1,4} \cong M[A'_{1,4}],$ which is a single element extension of $M_{1,4},$ is connected.\\

\begin{center}
	$\mathbf{A'_{1,4}} = 
	\begin{pNiceMatrix}%
		[first-col,
		first-row,
		code-for-first-col = \color{black},
		code-for-first-row = \color{black}]
		& 1 & 2 & 3 & 4 & 5 & 6 & 7 & 8 & 9\\
		& 1 & 0 & 0 & 0 & 0 & 1 & 1 & 2 & 0\\
		& 0 & 1 & 0 & 0 & 1 & 0 & 1 & 1 & 0\\
		& 0 & 0 & 1 & 0 & 1 & 1 & 0 & 1 & 0\\
		& 0 & 0 & 0 & 1 & 2 & 1 & 1 & 0 & 0\\
		& 0 & 1 & 1 & 0 & 0 & 0 & 0 & 0 & 0\\
		& 1 & 0 & 0 & 1 & 0 & 0 & 0 & 0	& 1
	\end{pNiceMatrix}$
\end{center}

\noindent This example motivates us to investigate the question: If $M$ is a connected $p$-matroid and $M_{a,b}$ is the splitting matroid of $M,$ then does there exist a single element extension of the splitting matroid that is connected? In the next section, we answer this question by defining the element splitting operation on a $p$-matroid $M$ which is splitting operation on $M$ followed by a single element extension.

\section{Element Splitting Operation}
	
In this section, we define the element splitting operation on a $p$-matroid $M$ and characterize its circuits.   

\begin{definition}\label{def1}
	
Let $M\cong M[A]$ be a $p$-matroid on the ground set $E,$$\{a,b\} \subset E,$ and $M_{a,b}$ be the corresponding splitting matroid. Let the matrix $A_{a,b}$ represents $M_{a,b}$ on $GF(p).$ Construct the matrix $A'_{a,b}$ from $A_{a,b}$ by adding an extra column to $A_{a,b},$labeled as $z,$ which has the last coordinate equal to $\alpha\neq 0$ and the rest are equal to zero. Define the element splitting matroid $M'_{a,b}$ to be the vector matroid  $M[A'_{a,b}]$. The transformation of $M$ to $M'_{a,b}$ is called the element splitting operation.

\end{definition}
\noindent Splitting and element splitting operations on binary matroids are closely studied in \cite{mds, mal, Mills, rsw, ma, shika, ba}. A matroid $L$ is a lift of the matroid $M,$ if there exists a matroid $N,$ and $X\subset E(N)$ such that $N/X=M,$ and $N \setminus X=L.$ If $X$ is a singleton set, then $L$ is called an elementary lift of $M.$ In the following result, Mundhe et al. \cite{mundhe} showed the equivalence of splitting matroid with  elementary lift for binary matroids: 
	\begin{lemma}
		Let $M$ and $L$ be binary matroids. Then $L$ is an elementary lift of $M$ if and only if $L$ is isomorphic to $M_T$ for some $T\subset E(M).$ 
	\end{lemma}
	\noindent Lemma 2.2 can be extended to $p$-matroids  by using the similar arguments used to prove it   in \cite{mundhe}. Thus a splitting matroid $M_{a,b}$ of $p$-matroid $M$ is an elementary lift of  $M.$ In-depth study on lifted graphic matroid is done in \cite{CG, CW, FM}.

\begin{remark}
  rank$(A)<$ rank$(A'_{a,b})=$ rank$(A)+1.$ If the rank functions of $M$ and $M'_{a,b}$ are denoted by $r$ and $r',$respectively, then $r(M) < r'(M'_{a,b})= r(M) + 1.$
\end{remark}

\noindent Let $C=\{v_1,v_2,\dots,v_k\},$ where $v_i, i= 1,2, \ldots,k$ are column vectors of the matrix $A,$ be an $np$-circuit of $M$ containing only $a.$ Assume $v_1=a,$ without loss of generality.  Then there exist non-zero scalars $\alpha_1,\alpha_2,\dots,\alpha_k \in GF(p)$ such that $\alpha_1 v_1+ \alpha_2 v_2+ \ldots+\alpha_k v_k \equiv 0(\mod p).$ Let $\alpha_z \in GF(p)$ be such that $\alpha_z + \alpha_1  \equiv 0 (\mod p).$ Note that $\alpha_z \neq 0.$ Then in the matrix $A'_{a,b},$ we have $\alpha_1 v_1+ \alpha_2 v_2+ \ldots+\alpha_k v_k + \alpha_z z  \equiv 0(\mod p).$ Therefore the set $C \cup z=\{v_1,v_2,\dots,v_k, z\}$ is a dependent set of $M'_{a,b}.$ If both $a,b\in C,$ then by the similar arguments, we can show that $C \cup z$ is a dependent set of $M'_{a,b}.$ 
 
\noindent In the next Lemma, we characterize the circuits of $M'_{a,b}$ containing the element $z.$ 
\begin{lemma}\label{L1}
Let $C$ be a circuit of $p$-matroid $M.$ Then $C \cup z$ is a circuit of $M'_{a,b}$ if and only if $C$ is an $np$-circuit of $M.$
\end{lemma}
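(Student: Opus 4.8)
The plan is to prove both directions by tracking how the extra row and the extra column $z$ interact with dependencies. The key observation, already spelled out before the statement, is that the extra coordinate (the last row of $A_{a,b}$) records a kind of ``parity'' of the $\{a,b\}$-coefficients of any dependency, and that the column $z$ is supported exactly on this last coordinate.

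For the ``if'' direction, suppose $C$ is an $np$-circuit of $M$. By definition $C$ is a circuit of $M$ containing $a$ or $b$ (but not both, or both --- the argument preceding the lemma handles either case) which fails to be a circuit of $M_{a,b}$; concretely, the unique dependency $\sum_{v_i\in C}\alpha_i v_i\equiv 0$ in $A$ does \emph{not} lift to a dependency in $A_{a,b}$, which forces the relevant sum of $\alpha_i$ over $C\cap\{a,b\}$ to be nonzero mod $p$. Choosing $\alpha_z$ to cancel that sum, exactly as in the displayed computation above, gives a dependency of $C\cup z$ in $A'_{a,b}$, so $C\cup z$ is dependent in $M'_{a,b}$. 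To upgrade ``dependent'' to ``circuit'' I would show minimality: every proper subset is independent. A proper subset either omits $z$, in which case it is a proper subset of the circuit $C$ of $M$ and hence independent in $M$, and independence in $M$ implies independence in $M'_{a,b}$ (deleting the last row and the column $z$ from $A'_{a,b}$ returns $A$, and independence is preserved under this restriction since $C$ was an $np$-circuit so no shorter dependency using only old columns appears); or it is of the form $(C\setminus\{x\})\cup z$ for some $x\in C$, and I would argue that $C\setminus\{x\}$ is independent in $M$, and that adjoining $z$ cannot create a dependency because $z$ is the only column with a nonzero entry in the last coordinate unless some old column in $C\setminus\{x\}$ also has a nonzero last entry --- but the last row of $A_{a,b}$ is nonzero only in columns $a,b$, and one checks that $C\setminus\{x\}$ together with $z$ spans one more dimension than $C\setminus\{x\}$ alone. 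This minimality bookkeeping is the main obstacle and must be done carefully, splitting on whether $x\in\{a,b\}$.

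For the ``only if'' direction, suppose $C\cup z$ is a circuit of $M'_{a,b}$ with $z\in C\cup z$. Then there is a dependency $\sum_{v_i\in C}\alpha_i v_i+\alpha_z z\equiv 0$ with all coefficients nonzero (by minimality of the circuit, in particular $\alpha_z\neq 0$ and no $v_i$ can be dropped). Reading the last coordinate of this relation, and using that $z$ contributes $\alpha_z$ there while the only old columns contributing to that coordinate are $a$ and $b$ (with coefficient $\alpha$), we get that $C\cap\{a,b\}$ is nonempty and the corresponding partial sum is $-\alpha_z\neq 0$. Deleting the column $z$, the relation $\sum_{v_i\in C}\alpha_i v_i\equiv 0$ holds in $A$ (the last coordinate now reads $\sum_{v_i\in C\cap\{a,b\}}\alpha\neq 0$, which is a contradiction --- so I must instead delete the \emph{last row} as well, i.e.\ read the relation in $A$, where it says $\sum\alpha_i v_i\equiv 0$ and is genuinely a dependency). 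Hence $C$ is dependent in $M$; minimality of $C$ as a circuit of $M$ follows because any proper dependency inside $C$ would, being a dependency of old columns, have to have the last-row sum over $\{a,b\}$ equal to zero, and then it would already be a dependency in $A_{a,b}$ not involving $z$, contradicting that $C\cup z$ is a circuit (no proper subset of a circuit of $M'_{a,b}$ is dependent, and such a set lies in $M_{a,b}\subseteq M'_{a,b}$). Finally $C$ contains $a$ or $b$ and is not a circuit of $M_{a,b}$ (its dependency does not lift, as the last-row sum is $-\alpha_z\neq0$), so $C$ is an $np$-circuit of $M$, completing the proof.
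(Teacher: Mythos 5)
Your proof is correct in substance and follows essentially the same route as the paper: show $C\cup z$ is dependent via the explicit linear relation, then verify minimality by cases on whether the proper subset contains $z$; your converse, though phrased by reading off the last coordinate rather than by the paper's quick contrapositive, reaches the same conclusion. Two spots deserve tightening. First, in the minimality check a proper subset of $C\cup z$ omitting $z$ need not be a \emph{proper} subset of $C$ --- it can be $C$ itself, which is dependent in $M$; what rescues you here is precisely the $np$-circuit hypothesis, which forces $C$ to be independent in $M_{a,b}$ (a dependency of a subset of $C$ in $A_{a,b}$ restricts to one in $A$, so a minimal dependent subset of $C$ in $M_{a,b}$ would have to equal $C$, making $C$ a circuit of $M_{a,b}$, a contradiction). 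Your parenthetical gestures at this, but the sentence as written misses the case. Second, in the only-if direction you re-derive that $C$ is a circuit of $M$, which is already a hypothesis of the lemma; moreover the minimality sub-argument there (that a proper dependency inside $C$ ``would have to have the last-row sum over $\{a,b\}$ equal to zero'') is not justified as stated --- but since that entire step is unnecessary, it does no harm.
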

\begin{proof}
 
First assume that $C \cup z$ is a circuit of $M'_{a,b}.$ If $C$ is not an $np$-circuit of $M,$ then it is a $p$-circuit of $M,$ and hence it is also a circuit of $M_{a,b}$ and $M'_{a,b},$ as well. Thus we get a circuit $C$ contained in $C \cup z,$ a contradiction.	
	
\noindent Conversely, suppose $C$ is an $np$-circuit of $M.$ Then $C$ is an independent set of $M'_{a,b}.$ As noted earlier, $C \cup z$ is a dependent set of $M'_{a,b}.$ On the contrary, assume that $C \cup z$ is not a circuit of $M'_{a,b},$ and $C_1 \subset C \cup z $ be a circuit of $M'_{a,b}.$\\
\textbf{Case 1}: $z \notin C_1.$ Then $C_1$ is a circuit contained in $C,$ which is contradictory to the fact that $C$ is independent in $M'_{a,b}.$\\
\textbf{Case 2}: $z \in C_1.$ Then $C_1 \setminus z$ is a dependent set of $M$ contained in the circuit $C$ which is not possible. Thus we get $ C \cup z$ is a circuit of $M'_{a,b}.$  

\end{proof}
\noindent We denote the collection of circuits described in Lemma \ref{L1} by $\mathcal {C}_z.$

\begin{theorem}
Let $M$ be a $p$-matroid on the ground set $E$ and $\{a,b\}\subset E.$ Then $\mathcal{C}(M'_{a,b})= \mathcal{C}(M_{a,b})\cup\mathcal {C}_z.$
\end{theorem}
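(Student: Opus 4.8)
The goal is to show that the circuits of $M'_{a,b}$ are exactly those of $M_{a,b}$ together with the family $\mathcal{C}_z$ from Lemma \ref{L1}. I would prove the two inclusions separately, handling first the circuits not containing $z$ and then those containing $z$. The case of circuits containing $z$ is already completely settled by Lemma \ref{L1}: a subset $C\cup z$ is a circuit of $M'_{a,b}$ with $z\in C\cup z$ if and only if $C$ is an $np$-circuit of $M$, i.e. if and only if $C\cup z\in\mathcal{C}_z$. So the real content is to identify the circuits of $M'_{a,b}$ that avoid $z$.

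For the inclusion $\mathcal{C}(M_{a,b})\cup\mathcal{C}_z\subseteq\mathcal{C}(M'_{a,b})$: members of $\mathcal{C}_z$ are circuits by Lemma \ref{L1}. For a circuit $C\in\mathcal{C}(M_{a,b})$, observe that deleting the column $z$ from $A'_{a,b}$ returns $A_{a,b}$, so $M_{a,b}=M'_{a,b}\setminus z$; since deletion of an element never creates new circuits among the remaining elements and never destroys existing ones, every circuit of $M_{a,b}$ is a circuit of $M'_{a,b}$. For the reverse inclusion $\mathcal{C}(M'_{a,b})\subseteq\mathcal{C}(M_{a,b})\cup\mathcal{C}_z$: let $D$ be a circuit of $M'_{a,b}$. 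If $z\in D$, then $D\in\mathcal{C}_z$ by Lemma \ref{L1} (writing $D=C\cup z$ and noting $C$ must be an $np$-circuit of $M$). If $z\notin D$, then $D\subseteq E\cup\{\text{old elements}\}$ is a circuit of $M'_{a,b}$ disjoint from $z$, hence a circuit of $M'_{a,b}\setminus z=M_{a,b}$; so $D\in\mathcal{C}(M_{a,b})$.

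The step I expect to require the most care is the clean justification that $M_{a,b}=M'_{a,b}\setminus z$ together with the standard fact that for any matroid $N$ and element $e$, $\mathcal{C}(N\setminus e)=\{C\in\mathcal{C}(N): e\notin C\}$. This is routine but is the hinge of the argument, since it converts the structural question about $M'_{a,b}$ into a question about which circuits of $M'_{a,b}$ meet $z$, and the latter is exactly Lemma \ref{L1}. I would also remark that the two families on the right are disjoint (one consists of sets containing $z$, the other of sets avoiding $z$), so the union is automatically the disjoint partition of $\mathcal{C}(M'_{a,b})$ into the $z$-free and $z$-containing circuits, which makes the equality transparent. No genuine obstacle is anticipated beyond being precise about the deletion identity.
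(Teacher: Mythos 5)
Your proof is correct and follows essentially the same route as the paper's: split the circuits of $M'_{a,b}$ according to whether they contain $z$, handle the $z$-containing ones via Lemma \ref{L1} and the rest via the deletion identity $M_{a,b}=M'_{a,b}\setminus z$, a point on which you are in fact more explicit than the paper. The one step both you and the paper leave implicit is that a circuit $D$ of $M'_{a,b}$ with $z\in D$ really does have $D\setminus z$ a circuit (indeed an $np$-circuit) of $M$ --- Lemma \ref{L1} as stated only applies to sets $C\cup z$ with $C$ already known to be a circuit of $M$, so strictly one should first argue that $D\setminus z$, being independent in $M_{a,b}$ but dependent in $M$, contains an $np$-circuit $C$ with $C\cup z\subseteq D$ forcing $D=C\cup z$ --- but since the paper glosses over exactly the same point, your argument matches its level of detail.
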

\begin{proof}
The inclusion $\mathcal{C}(M_{a,b})\cup\mathcal {C}_z \subset \mathcal{C}(M'_{a,b})$ follows from the Definition \ref{def1} and Lemma \ref{L1}. For the other inclusion, let $C \in \mathcal{C}(M'_{a,b}).$ If $z \notin C,$ then $C \in \mathcal{C}(M_{a,b}).$ Otherwise, $C \in \mathcal {C}_z.$

\end{proof}

\begin{example}\label{ex1}
	Consider the matroid $R_8,$ the vector matroid of the following matrix $A$ over field $GF(3)$.
	
	\begin{center}
		
		$\mathbf{A} =
		\begin{pNiceMatrix}%
			[first-col,
			first-row,
			code-for-first-col = \color{blue},
			code-for-first-row = \color{blue}]
			& 1 & 2 & 3 & 4 & 5 & 6 & 7 & 8   \\
			&  1 & 0 & 0 & 0 & 2 & 1 & 1 & 1  \\
			&  0 & 1 & 0 & 0 & 1 & 2 & 1 & 1  \\
			&  0 & 0 & 1 & 0 & 1 & 1 & 2 & 1  \\
			&  0 & 0 & 0 & 1 & 1 & 1 & 1 & 2  \\
		\end{pNiceMatrix} \qquad
		\mathbf{A'_{3,5}} =
		\begin{pNiceMatrix}%
			[first-col,
			first-row,
			code-for-first-col = \color{blue},
			code-for-first-row = \color{blue}]
			& 1 & 2 & 3 & 4 & 5 & 6 & 7 & 8 & 9 \\
			&  1 & 0 & 0 & 0 & 2 & 1 & 1 & 1 & 0 \\
			&  0 & 1 & 0 & 0 & 1 & 2 & 1 & 1 & 0\\
			&  0 & 0 & 1 & 0 & 1 & 1 & 2 & 1 & 0 \\
			&  0 & 0 & 0 & 1 & 1 & 1 & 1 & 2 & 0 \\
			&  0 & 0 & 1 & 0 & 1 & 0 & 0 & 0 & 1 \\
		\end{pNiceMatrix}$
	\end{center}
	
	\noindent For $a=3$, $b=5$ and $\alpha=1$ the representation of  element splitting matroid $M'_{3,5}$ over $GF(3)$ is given by the matrix $A'_{3,5}$.
	The collection of circuits of $M,$ $M_{3,5}$ and $M'_{3,5}$ is given in the following table.
	
	\begin{center}
		\begin{tabular}{ | m{4cm}| m{4cm} | m{5cm} |}
			\hline
			\textbf{~~~~~~~~~Circuits of $M$}  & \textbf{~~~~~~~~Circuits of $M_{3,5}$} & \textbf{~~~~~~~~Circuits of $M'_{3,5}$}\\
			\hline
			$\{1, 2, 3, 4,5\}$, $\{1, 2, 7, 8\}$, $\{1, 4, 6, 7\}$ & $\{1, 2, 3, 4,5\}$, $\{1, 2, 7, 8\}$, $\{1, 4, 6, 7\}$ & $\{1, 2, 3, 4,5\}$, $\{1, 2, 7, 8\}$, $\{1, 4, 6, 7\}$ \\
			
			\hline
			$ \{2, 4, 6, 8\}$,$\{3, 5, 6, 7, 8\}$ & $\{2, 4, 6, 8\}$,$\{3, 5, 6, 7, 8\}$  & $\{2, 4, 6, 8\}$,$\{3, 5, 6, 7, 8\}$ \\
			\hline

			- & $\{1, 2, 3, 5, 6, 7\}$, $\{1, 2, 3, 5, 6, 8\}$  & $\{1, 2, 3, 5, 6, 7\}$, $\{1, 2, 3, 5, 6, 8\}$\\
			\hline

			-& $\{1, 3, 4, 5, 6, 8\}$, $\{1, 3, 4, 5, 7, 8\}$  & $\{1, 3, 4, 5, 6, 8\}$, $\{1, 3, 4, 5, 7, 8\}$\\
			\hline

			-& $\{2, 3, 4, 5, 6, 7\}$, $\{2, 3, 4, 5, 7, 8\}$  & $\{2, 3, 4, 5, 6, 7\}$, $\{2, 3, 4, 5, 7, 8\}$ \\
			\hline
			
			$\{1, 2, 3, 4, 6\}$, $\{1, 2, 3, 4, 7\}$ & -  & $\{1, 2, 3, 4, 6, 9\}$, $\{1, 2, 3, 4, 7, 9\}$ \\
			\hline
			
			$ \{1, 2, 3, 4, 8\}$, $\{1, 2, 5, 6\} $ & -  & $\{1, 2, 3, 4, 8, 9\}$, $\{1, 2, 5, 6, 9\}$ \\
			\hline
			
			$\{1, 3, 5, 7\}$, $\{1, 3, 6, 8\}$ & - & $\{1, 3, 5, 7, 9\}$, $\{1, 3, 6, 8, 9\}$ \\
			\hline
			
			$\{1, 4, 5, 8\}$, $\{1, 5, 6, 7, 8\} $ & -  & $\{1, 4, 5, 8, 9\}$, $\{1, 5, 6, 7, 8, 9\} $ \\
			\hline
			
			$\{2, 3, 5, 8\}$, $\{2, 3, 6, 7\}$ & - & $\{2, 3, 5, 8, 9\}$, $\{2, 3, 6, 7, 9\}$ \\
			\hline
			
			$\{2, 4, 5, 7\}$, $\{2, 5, 6, 7, 8\}$ & - & $\{2, 4, 5, 7, 9\}$, $\{2, 5, 6, 7, 8, 9\}$ \\
			\hline
			
			$\{3, 4, 5, 6\}$, $\{3, 4, 7, 8\}$ & - & $\{3, 4, 5, 6, 9\}$, $\{3, 4, 7, 8, 9\}$ \\
			\hline
			
			$\{4, 5, 6, 7, 8\}$ & - & $\{4, 5, 6, 7, 8, 9\}$ \\
			\hline
		\end{tabular}
		%\caption {The circuits of $M$ and $M_{3,5}$}
	\end{center}

\end{example}

\subsection{Independent sets, Bases and Rank function of $M'_{a,b}$}
In this section, we describe independent sets, bases and rank function of $M'_{a,b}.$ Denote the set $\mathcal{I}_z=\{I\cup z : I\in \mathcal{I}(M)\}.$  

\begin{lemma}\label{Indep}
Let $M\cong M[A]$ be a $p$-matroid with the ground set $E$ and $M'_{a,b}$ be its element splitting matroid. Then $\mathcal{I}(M'_{a,b})=\mathcal{I}(M_{a,b})\cup \mathcal{I}_z$ 
\end{lemma}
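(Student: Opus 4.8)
The plan is to characterize $\mathcal{I}(M'_{a,b})$ by splitting into the two cases $z\notin I$ and $z\in I$, just as the circuit characterization was handled. For the inclusion $\mathcal{I}(M_{a,b})\cup\mathcal{I}_z\subseteq\mathcal{I}(M'_{a,b})$: if $I\in\mathcal{I}(M_{a,b})$ then $I$ is independent in $M'_{a,b}$ since adding a column to a representing matrix cannot create a dependency among the old columns; for $I\cup z$ with $I\in\mathcal{I}(M)$, note that in $A'_{a,b}$ the column $z$ has a nonzero entry only in the last row, while the last row of the submatrix on $I$ came from the appended row of $A_{a,b}$; I would argue that a linear combination of the columns of $I\cup z$ equalling zero forces, by looking at the first $m$ rows (those of $A$), a dependency among the columns of $I$ in $M$, hence all coefficients on $I$ are zero, and then the coefficient on $z$ is zero too because $z$ itself is nonzero. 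So $I\cup z\in\mathcal{I}(M'_{a,b})$.

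For the reverse inclusion, take $J\in\mathcal{I}(M'_{a,b})$. If $z\notin J$, then $J$ is a subset of the ground set of $M_{a,b}$ containing no circuit of $M'_{a,b}$, in particular no circuit of $M_{a,b}$ (since $\mathcal{C}(M_{a,b})\subseteq\mathcal{C}(M'_{a,b})$ by the theorem just proved), so $J\in\mathcal{I}(M_{a,b})$. If $z\in J$, write $J=I\cup z$ with $I\subseteq E$; I must show $I\in\mathcal{I}(M)$. Suppose not; then $I$ contains a circuit $C$ of $M$. By Lemma~\ref{L1} and the structure of the circuits, either $C$ is already a circuit of $M'_{a,b}$ (if $C$ is a $p$-circuit or a circuit of $M$ containing neither $a$ nor $b$, it persists in $M_{a,b}$ hence in $M'_{a,b}$), contradicting independence of $J$; or $C$ is an $np$-circuit, in which case by Lemma~\ref{L1}, $C\cup z$ is a circuit of $M'_{a,b}$ contained in $I\cup z=J$, again contradicting independence of $J$. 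Hence $I\in\mathcal{I}(M)$ and $J\in\mathcal{I}_z$.

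I expect the main subtlety to be the case $z\in J$ in the reverse direction: one has to make sure that \emph{every} dependent $I$ (not just minimally dependent ones) leads to a circuit of $M'_{a,b}$ inside $J$, which is why invoking Lemma~\ref{L1} together with the just-proved circuit theorem $\mathcal{C}(M'_{a,b})=\mathcal{C}(M_{a,b})\cup\mathcal{C}_z$ is the clean route — it guarantees that any circuit $C\subseteq I$ of $M$ spawns a circuit ($C$ itself or $C\cup z$) sitting inside $J$. A secondary point worth stating carefully is that a circuit of $M$ containing neither $a$ nor $b$ is unaffected by the splitting and hence lies in $\mathcal{C}(M_{a,b})$; this uses only that the appended row of $A_{a,b}$ is zero on all columns outside $\{a,b\}$, so such a circuit's vanishing linear combination still vanishes in the extra coordinate. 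Once both inclusions are in hand, the lemma follows.
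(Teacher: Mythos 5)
Your proof is correct, and its overall skeleton (two inclusions, then a case split on whether $z$ lies in the independent set) matches the paper's. The one genuine difference is in the case $z\in J$: the paper first observes $J\setminus z\in\mathcal{I}(M_{a,b})$ and then invokes the structural fact that an $M_{a,b}$-independent set which is $M$-dependent decomposes as $C\cup I'$ with $C$ an $np$-circuit and $I'\in\mathcal{I}(M)$ (a fact imported from the companion splitting paper), before applying Lemma~\ref{L1} to get the contradiction. You instead argue directly: any circuit $C$ of $M$ contained in $I$ either survives into $\mathcal{C}(M_{a,b})\subseteq\mathcal{C}(M'_{a,b})$ (when $C$ avoids $\{a,b\}$ or is a $p$-circuit) or, being an $np$-circuit, yields the circuit $C\cup z\subseteq J$ of $M'_{a,b}$ via Lemma~\ref{L1}; either way $J$ is dependent, a contradiction. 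Your route is slightly more self-contained, since it needs only the circuit characterization $\mathcal{C}(M'_{a,b})=\mathcal{C}(M_{a,b})\cup\mathcal{C}_z$ already proved in the paper rather than the external description of $\mathcal{I}(M_{a,b})$; the paper's version is shorter on the page but leans on the cited unpublished result. Your explicit verification of the easy inclusion (the column $z$ being supported only on the appended row, so a dependency on $I\cup z$ restricts to one on $I$ in $M$) is also sound; the paper leaves that direction as an observation.
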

\begin{proof}
Notice that $\mathcal{I}(M_{a,b})\cup \mathcal{I}_z \subseteq \mathcal{I}(M'_{a,b}).$ For other inclusion, assume $ T \in \mathcal{I}(M'_{a,b}).$ If $z\notin T,$ then $T\in \mathcal{I}(M_{a,b}).$ And if $z\in T,$ then $T\setminus\{z\} \in \mathcal{I}(M_{a,b}).$ That is $T=I\cup z$ for some $I\in\mathcal{I}(M_{a,b}).$
\begin{description}
	\item \textbf{Case 1} : $I\in \mathcal{I}(M).$ Then $T\in \mathcal{I}_z.$
	\item \textbf{Case 2} : $I=C\cup I'$ where $C$ is an $np$-circuit of $M$ and $I' \in \mathcal {I}(M).$ Then by Lemma \ref{L1}, $C\cup z$ is a circuit of $M'_{a,b}$ contained in $T,$ a contradiction.
\end{description}
\end{proof}
\begin{lemma}\label{k}
Let $M$ be a $p$-matroid and $\{a,b\} \subset E.$ Then $\mathcal{B}(M'_{a,b})=\mathcal{B}(M_{a,b}) \cup \mathcal{B}_z,$ where $\mathcal{B}_z = \{ B \cup z : B \in \mathcal{B}(M)\}.$
\end{lemma}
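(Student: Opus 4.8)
The plan is to derive the statement about bases directly from the already-established description of independent sets in Lemma \ref{Indep}, together with the rank remark $r'(M'_{a,b}) = r(M)+1$. First I would recall that a basis of any matroid is simply a maximum-size independent set, and that all bases have the same cardinality, namely the rank. So a set $B'$ is in $\mathcal{B}(M'_{a,b})$ if and only if $B' \in \mathcal{I}(M'_{a,b})$ and $|B'| = r'(M'_{a,b}) = r(M)+1$. By Lemma \ref{Indep}, $\mathcal{I}(M'_{a,b}) = \mathcal{I}(M_{a,b}) \cup \mathcal{I}_z$, so I would split into the two cases $z \notin B'$ and $z \in B'$.

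For the case $z \notin B'$: then $B' \in \mathcal{I}(M_{a,b})$ with $|B'| = r(M)+1$. Since the splitting operation adds one row, $r(M_{a,b}) \le r(M)+1$; and conversely an independent set of size $r(M)+1$ inside $M_{a,b}$ forces $r(M_{a,b}) = r(M)+1$, so $B'$ is a maximum independent set of $M_{a,b}$, i.e. $B' \in \mathcal{B}(M_{a,b})$. Conversely, every $B \in \mathcal{B}(M_{a,b})$ has size $r(M_{a,b})$; one notes that whenever $M$ admits a nontrivial splitting (so $M_{a,b}$ has full rank $r(M)+1$) such a $B$ is an independent set of $M'_{a,b}$ of the right size, hence a basis of $M'_{a,b}$ (one should handle the trivial-splitting degenerate case, where $r(M_{a,b}) = r(M)$ and the appended row is dependent, separately — there $\mathcal{B}(M_{a,b})$ contributes nothing beyond what $\mathcal{B}_z$ already gives, or one simply restricts attention, as the paper implicitly does, to the nontrivial case). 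For the case $z \in B'$: write $B' = I \cup z$ with $I \in \mathcal{I}(M'_{a,b})$; by Case 2 of the proof of Lemma \ref{Indep}, $I$ cannot contain an $np$-circuit of $M$, so in fact $I \in \mathcal{I}(M)$, and $|I| = r(M)$, whence $I \in \mathcal{B}(M)$ and $B' \in \mathcal{B}_z$. Conversely, for $B \in \mathcal{B}(M)$, the set $B \cup z$ lies in $\mathcal{I}_z \subseteq \mathcal{I}(M'_{a,b})$ and has size $r(M)+1 = r'(M'_{a,b})$, so it is a basis.

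Combining the two cases in both directions yields $\mathcal{B}(M'_{a,b}) = \mathcal{B}(M_{a,b}) \cup \mathcal{B}_z$. The main obstacle I anticipate is bookkeeping around the trivial-splitting case: if the splitting is trivial then the extra row of $A_{a,b}$ is a linear combination of existing rows, so $r(M_{a,b}) = r(M)$ rather than $r(M)+1$, and then $\mathcal{B}(M_{a,b})$ need not consist of bases of $M'_{a,b}$. One clean way around this is to observe that, by Lemma \ref{L1} and the circuit characterization, $M_{a,b}$ is a genuine rank-$(r(M)+1)$ matroid exactly when $M$ has at least one $np$-circuit, and to phrase the argument so that the $z \in B'$ part supplies the bases in the degenerate case; alternatively, since the paper's later connectivity results concern the non-degenerate situation, it suffices to note the identity holds verbatim whenever the splitting is nontrivial and to remark on the trivial case in one line. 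The rest is a routine size-and-membership count built on the two preceding lemmas.
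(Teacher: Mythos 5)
Your argument is essentially the paper's own proof: both directions are obtained by combining Lemma \ref{Indep} with the fact that bases are exactly the independent sets of size $r'(M'_{a,b})=r(M)+1$, and the case $z\in B'$ is resolved by ruling out the ``$np$-circuit plus independent set'' alternative exactly as in Case 2 of Lemma \ref{Indep}. So in substance you have reproduced the intended proof.

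The one place you go beyond the paper is the caveat about trivial splitting, and you are right to raise it. The paper dismisses the inclusion $\mathcal{B}(M_{a,b})\cup\mathcal{B}_z\subseteq\mathcal{B}(M'_{a,b})$ as ``easy to observe,'' but when $M$ has no $np$-circuit one has $M_{a,b}=M$ and $r(M_{a,b})=r(M)<r(M)+1=r'(M'_{a,b})$ (this follows from Lemma \ref{s} applied to $S=E$), so members of $\mathcal{B}(M_{a,b})$ are too small to be bases of $M'_{a,b}$ and the stated identity genuinely fails; the lemma needs the hypothesis that the splitting is non-trivial (or the convention that the degenerate case is excluded throughout). Your instinct to handle this separately is the correct fix, and the rest of your size-and-membership bookkeeping is sound.
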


\begin{proof}
It is easy to observe that $\mathcal{B}(M_{a,b})\cup \mathcal{B}_z \subseteq \mathcal{B}(M'_{a,b}).$ Next assume that $B \in \mathcal{B}(M'_{a,b}).$ Then $rank (B)=rank (M)+1.$ If $B$ contains $z,$ then $B\setminus z$ is an independent set of $M_{a,b}$ of size $rank(M).$ Then by similar arguments given in the proof of Lemma \ref{Indep}, $B = I \cup z$, for some $I \in \mathcal{I}(M)$. Therefore $B\setminus z$  is a basis of $M$ and $B \in \mathcal{B}_z.$ If $z\notin B,$ then $B$ is an independent set of size $rank(M)+1.$ Therefore $B \in \mathcal{B}(M_{a,b}).$
\end{proof}

\noindent In the following lemma, we provide the rank function of $M'_{a,b}$ in terms of the rank function of $M.$
\begin{lemma}\label{s}
	Let $r$ and $r'$ be the rank functions of the matroids $M$ and $M'_{a,b},$ respectively. Suppose $S\subseteq E(M).$ Then $r'(S \cup z) = r(S)+1, $ and
	\begin{equation}
		\begin{split}
			r'(S) &= r(S) , \text { ~~~~~  if S contains no np-circuit of M; and}\\
			&= r(S)+1,\text {~   if S contains an np-circuit of M.}
		\end{split}
	\end{equation}
\end{lemma}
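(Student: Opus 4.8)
The plan is to handle the two claimed formulas separately, using the structural results already established. For the first claim, $r'(S\cup z)=r(S)+1$: I would argue that a maximal independent subset of $S\cup z$ in $M'_{a,b}$ can be taken to contain $z$. Indeed, pick a maximal independent subset $J$ of $S$ in $M'_{a,b}$; by Lemma \ref{Indep} applied to $J\subseteq E(M)$ (so $z\notin J$), $J\in\mathcal{I}(M_{a,b})$, and in fact since $J$ contains no circuit of $M'_{a,b}$ it contains no $np$-circuit of $M$, so $J\in\mathcal{I}(M)$. Then $J\cup z\in\mathcal{I}_z\subseteq\mathcal{I}(M'_{a,b})$ by Lemma \ref{Indep}, and $J\cup z\subseteq S\cup z$, so $r'(S\cup z)\ge|J|+1$. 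Conversely any independent subset of $S\cup z$ has the form $I$ or $I\cup z$ with $I\subseteq S$ independent in $M'_{a,b}$, hence (as above) $I\in\mathcal{I}(M)$ with $|I|\le r(S)$; so $r'(S\cup z)\le r(S)+1$. Combined with $|J|=r'(S)$ this also forces $r'(S)\le r(S)$, which feeds into the next part.

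For the second claim I split on whether $S$ contains an $np$-circuit of $M$. If $S$ contains no $np$-circuit, then every circuit of $M$ inside $S$ is a $p$-circuit, hence a circuit of $M_{a,b}$; I would show $\mathcal{I}(M)|_S=\mathcal{I}(M_{a,b})|_S$ (a subset of $S$ is dependent in $M$ iff it contains a circuit of $M$ inside $S$, iff — since all such circuits are $p$-circuits — it contains a circuit of $M_{a,b}$), and likewise that these coincide with $\mathcal{I}(M'_{a,b})|_S$ using Lemma \ref{Indep} (no subset of $S$ is in $\mathcal{I}_z$ since $z\notin S$, and the $M_{a,b}$-independent subsets of $S$ are exactly the $M$-independent ones). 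Hence $r'(S)=r(S)$. If $S$ does contain an $np$-circuit $C$, write $S=C\cup(S\setminus C)$ and let $I'$ be a maximal subset of $S\setminus C$ with $C\cup I'$ independent in... — more cleanly: take a basis $B$ of $S$ in $M$, so $|B|=r(S)$ and $B$ spans $C$ in $M$. In $M'_{a,b}$, $B$ is still independent (Lemma \ref{Indep}, since $B$ contains no circuit of $M$ hence no $np$-circuit), and I claim $B\cup z'$ is independent in $M'_{a,b}$ for a suitable element $z'$ of $C\setminus B$; equivalently $B$ does not span $C$ in $M'_{a,b}$, because the unique circuit of $M$ in $B\cup\{x\}$ for $x\in C\setminus B$ is (contained in) $C$'s fundamental circuit, which being an $np$-circuit is not a circuit of $M'_{a,b}$. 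Thus $r'(S)\ge r(S)+1$, and with $r'(S)\le r'(S\cup z)=r(S)+1$ from the first part we get equality.

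The main obstacle is the case $S$ contains an $np$-circuit: I need to produce an $M'_{a,b}$-independent subset of $S$ of size $r(S)+1$, which amounts to exhibiting an element of $S$ not in the $M'_{a,b}$-span of a chosen $M$-basis of $S$. The delicate point is that an $np$-circuit of $M$ need not itself be contained in the span relations available, so I should be careful: given a circuit $C\subseteq S$ with $C$ an $np$-circuit, enlarge an independent set $C\setminus\{e\}$ (for some $e\in C$) to a basis $B$ of $S$ in $M$; then $B$ is $M'_{a,b}$-independent of size $r(S)$, and $B\cup\{e\}$ contains the circuit $C$ of $M$ but $C\cup z$, not $C$, is the circuit of $M'_{a,b}$ (Lemma \ref{L1}), while any other circuit of $M'_{a,b}$ inside $B\cup\{e\}$ would be a circuit of $M_{a,b}$ and hence (not containing $z$) a subset on which $M_{a,b}$ and $M$ could differ only through $p$-circuits — one must check $B\cup\{e\}$ stays independent in $M_{a,b}$, which holds because $|B\cup\{e\}|=r(S)+1\le r'(S)\le$ ... this needs the rank of $M_{a,b}$. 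I would instead invoke Lemma \ref{Indep} directly: $B\cup\{e\}$ contains no $np$-circuit other than possibly one inside it — but the only circuit of $M$ in $B\cup\{e\}$ is $C$ (fundamental circuit), which is an $np$-circuit, so by Lemma \ref{Indep} Case 2, $B\cup\{e\}\notin\mathcal{I}(M'_{a,b})$. So instead I take $B\cup z$: it is in $\mathcal{I}_z\subseteq\mathcal{I}(M'_{a,b})$, has size $r(S)+1$, and is contained in $S\cup z$ not $S$. This shows the first claim but not directly the second; to finish the second I must genuinely find the extra independent element inside $S$. The correct move is: since $C$ is an $np$-circuit, $C$ itself is independent in $M'_{a,b}$ (shown before Lemma \ref{L1}), so $r'(C)=|C|=r(C)+1$; then monotonicity gives $r'(S)\ge r'(C)=r(C)+1$, and I must upgrade $r(C)+1$ to $r(S)+1$, i.e. show adding the rest of $S$ to $C$ gains exactly $r(S)-r(C)$ more in $M'_{a,b}$ as well — this follows by the submodular/exchange argument comparing a basis of $S$ extending a basis of $C$ in $M$ versus in $M'_{a,b}$, together with the already-proved upper bound $r'(S)\le r(S)+1$. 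I expect this reconciliation to be the only step requiring real care; everything else is a direct application of Lemmas \ref{L1} and \ref{Indep}.
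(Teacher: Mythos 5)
The paper itself disposes of this lemma by citation: $r'(S\cup z)=r(S)+1$ ``follows from the definition'' and Equation (1) is quoted from Corollary 2.13 of the reference \cite{mjg}. You are therefore attempting strictly more than the paper does, which would be welcome --- but your argument contains a sign error that runs through the whole proposal. You repeatedly infer that a subset of $E$ which is independent in $M'_{a,b}$ ``contains no $np$-circuit of $M$'' and hence lies in $\mathcal{I}(M)$. This is backwards. An $np$-circuit $C$ is by definition a \emph{circuit of $M$} that is \emph{independent} in $M_{a,b}$ (and in $M'_{a,b}$; the paper notes this just before Lemma \ref{L1} --- only $C\cup z$, not $C$, is a circuit of $M'_{a,b}$). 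So the correct containment is $\mathcal{I}(M)\subseteq\mathcal{I}(M_{a,b})=\{I\subseteq E: I\in\mathcal{I}(M'_{a,b})\}$, not the reverse, and your maximal set $J$ need not be $M$-independent. The consequence you draw, $r'(S)\le r(S)$, is false exactly in the case the lemma is about: when $S$ contains an $np$-circuit, $r'(S)=r(S)+1$. The same confusion resurfaces when you invoke ``Lemma \ref{Indep} Case 2'' to conclude $B\cup\{e\}\notin\mathcal{I}(M'_{a,b})$: that case of Lemma \ref{Indep} concerns sets of the form $I\cup z$ and says nothing about subsets of $E$.

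Ironically, the construction you abandon at that point is the one that works. Take an $np$-circuit $C\subseteq S$, fix $e\in C$, extend the $M$-independent set $C\setminus\{e\}$ to a basis $B$ of $S$ in $M$, and consider $B\cup\{e\}$. Its unique $M$-circuit is $C$. Any circuit of $M_{a,b}$ inside $B\cup\{e\}$ is a dependent set of $M$ each of whose elements lies in some $M$-circuit contained in it (the support of a kernel vector of $A$ is covered by circuits inside that support), so it would have to equal $C$ --- impossible, since $C$ is an $np$-circuit and hence not a circuit of $M_{a,b}$. Thus $B\cup\{e\}$ is independent in $M_{a,b}$, hence in $M'_{a,b}$, giving $r'(S)\ge r(S)+1$; combined with $r'(S)\le r'(S\cup z)=r(S)+1$ this closes the hard case, with no need for the vague ``submodular/exchange'' reconciliation you defer at the end. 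The pieces of your proposal that do stand are the lower bound $r'(S\cup z)\ge r(S)+1$ via $B\cup z\in\mathcal{I}_z$ (your own self-correction) and the no-$np$-circuit case, where your identification of $\mathcal{I}(M)|_S$ with $\mathcal{I}(M_{a,b})|_S$ is sound.
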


\begin{proof}
The equality $r'(S \cup z) = r(S)+1$ follows from the definition. The proof of the Equation(1) is discussed in Corollary 2.13 of \cite{mjg}.
\end{proof}

\section{Connectivity of element splitting $p$-matroids}
Let $M$ be a matroid having the ground set $E,$ and $k$ be a positive integer. The $k$-separation of matroid $M$ is a partition $\{S, T\}$ of $E$ such that $|S|, |T|\geq k$ and $r(S) + r(T)- r(M) <k.$ For an integer $n \geq 2,$ we say $M$ is an $n$-connected if $M$ has no $k$- separation, where $1 \leq k \leq n-1.$ 

 \noindent In the following theorem, we provide a necessary and sufficient condition to preserve the connectedness of a $p$-matroid under element splitting operation.
\begin{theorem}\label{con1}
	Let $M$ be a connected $p$-matroid on the ground set $E.$ Then  $M'_{a,b}$  is a connected $p$-matroid on the ground set $E\cup \{z\}$ if and only if $M_{a,b}$ is the splitting matroid obtained by applying non-trivial splitting operation on $M.$ 
\end{theorem}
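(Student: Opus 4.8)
The plan is to prove each direction by contraposition, working directly with the rank function of $M'_{a,b}$ supplied by Lemma \ref{s} and the characterisation of independent sets in Lemma \ref{Indep}. For the easy direction, suppose the splitting is trivial, i.e. $M$ has no $np$-circuit with respect to $\{a,b\}$. Then by Lemma \ref{s}, $r'(S) = r(S)$ for every $S \subseteq E$, so $M'_{a,b}$ restricted to $E$ equals $M$ and $z$ is a coloop of $M'_{a,b}$ (since $r'(E) = r(M) = r'(E \cup z) - 1$). A coloop gives the $1$-separation $\{E, \{z\}\}$, so $M'_{a,b}$ is disconnected. This shows that connectedness of $M'_{a,b}$ forces the splitting to be non-trivial.

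For the main direction, assume the splitting is non-trivial, so there is at least one $np$-circuit of $M$ with respect to $\{a,b\}$. Suppose for contradiction that $M'_{a,b}$ has a $1$-separation $\{S, T\}$ of $E \cup z$, say $z \in T$, with $r'(S) + r'(T) = r'(M'_{a,b}) = r(M) + 1$. First I would dispose of the case $T = \{z\}$: here $r'(S) = r'(E) = r(M)$, but non-triviality together with Lemma \ref{s} gives $r'(E) = r(M) + 1$ (since $E$ contains an $np$-circuit), a contradiction; so $|T| \geq 2$ and $T \setminus z$ is nonempty. Now write $T' = T \setminus z$ and compare with the partition $\{S, T'\}$ of $E$. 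Using $r'(T) = r'(T' \cup z) = r(T') + 1$ (Lemma \ref{s}) and $r'(S) \geq r(S)$, we get $r(S) + r(T') + 1 \leq r'(S) + r'(T) = r(M) + 1$, hence $r(S) + r(T') \leq r(M)$, i.e. $r(S) + r(T') - r(M) \leq 0 < 1$, which would be a $1$-separation of $M$ — contradicting connectedness of $M$ — provided both $S$ and $T'$ are nonempty. Since $|T'| \geq 1$ and $S$ is nonempty (otherwise $r'(S) + r'(T) \geq 0 + r(M) + 1$ with $|S| = 0$, not a valid $1$-separation as $|S| \geq 1$ is required), the contradiction is complete and $M'_{a,b}$ is connected.

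The one subtlety I would be careful about is the boundary bookkeeping: a $1$-separation requires both sides to have size at least $1$, and the rank inequality I use, $r'(S) \geq r(S)$, together with $r'(T) = r(T') + 1$, must be checked to be the correct instances of Lemma \ref{s} regardless of whether $S$ happens to contain an $np$-circuit or not (the inequality $r'(S) \geq r(S)$ holds in both cases of the lemma, which is all that is needed). The genuinely essential input is that non-triviality makes $z$ fail to be a coloop — equivalently $r'(E) = r(M) + 1$ — so that splitting off $z$ from any near-separating set of $M'_{a,b}$ transfers the deficiency down to a $1$-separation of $M$ itself. I expect the main obstacle to be nothing deep but rather making the two size conditions ($|S| \geq 1$, $|T| \geq 1$) airtight in the degenerate cases $T = \{z\}$ and $S = \emptyset$; once those are handled, the rank arithmetic is immediate from Lemma \ref{s}.
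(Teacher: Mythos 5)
Your proof is correct and follows essentially the same route as the paper's: both directions rest on the rank formula of Lemma \ref{s}, the trivial case is disposed of via the separation $\{E,\{z\}\}$ (your coloop observation is exactly the paper's computation $1+r(E)-(r(M)+1)=0$), and the non-trivial case transfers a putative $1$-separation of $M'_{a,b}$ down to a $1$-separation $\{S, T\setminus z\}$ of $M$. The only difference is cosmetic: by using the uniform bound $r'(S)\ge r(S)$ you merge the paper's two subcases (according to whether the side without $z$ contains an $np$-circuit) into a single inequality, which is a mild tidying rather than a different argument.
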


\begin{proof}
	First assume that $M'_{a,b}$  is a connected $p$-matroid on the ground set $E\cup \{z\}.$ On the contrary, suppose $M_{a,b}$ is obtained by applying trivial splitting operation. Then $M$ contains no $np$ circuits with respect to the splitting by elements $a,b.$ Now, let $S=\{z\}$ and $T=E.$ Then  $r'(S) + r'(T) -r'(M'_{a,b}) = 1 + r (E)- (r(M)+1) = 0 < 1$ gives a 1-separation of $M'_{a,b},$ which is a contradiction.

	\noindent For converse part, assume that $M_{a,b}$ is the splitting matroid obtained by applying non-trivial splitting operation on $M.$  Suppose that, $M'_{a,b}$ is not connected. It means $M'_{a,b}$ has $1$-separation, say $\{S,T\}.$ Then $|S|, |T|\geq 1$ and
	\begin{equation}\label{eq2}
		 r'(S) + r'(T) -r'(M'_{a,b}) < 1.
	\end{equation}  
	$\mathbf{Case~ 1:}$ Assume $S =\{z\}.$ Then $T$ contains an $np$ circuit. Then Equation \ref{eq2} gives, $1+(1+r(T))-r(M)-1 < 1 \implies$ $r(T)<r(M),$ which is not possible.\\ 
	$\mathbf{Case~ 2:}$ Assume $|S| \geq 2,$  $ z\in S.$ If $T$ contains no $np$-circuit then Equation \ref{eq2} yields,  $(r(S\setminus z)+1)+r(T)-r(M)-1 < 1,$ that is $r(S\setminus z)+r(T)-r(M) < 1 .$ Therefore $\{S\setminus z, T\}$ gives $1-$separation of $M,$ a contradiction. Further, if  $T$ contains an $np$-circuit, then $r'(S)=r(S\setminus z)+1,$ $r'(T)=r(T)+1.$ By Equation \ref{eq2}, we get $(r(S\setminus z)+1)+(r(T)+1)-r(M)-1 < 1 ,$ which gives $r(S\setminus z)+r(T)-r(M) < 0 ,$ which is not possible. So in either case such separation does not exist. Therefore $M'_{a,b}$ is connected. 
\end{proof}
\noindent In Example \ref{ex1}, the $p$-matroid $R_8 \cong M[A]$ and its element splitting $p$-matroid $M'_{3,5} \cong M[A'_{3,5}]$ both are connected. In the next result we give a necessary and sufficient condition to preserve $3$-connectedness of a $p$-matroid under the element splitting operation.
\begin{theorem}
	Let $M$ be a $3$-connected $p$-matroid. Then $M'_{a,b}$ is $3$-connected $p$-matroid if and only if for every $t\in E(M)$ there is an $np$-circuit of $M$ not containing $t.$ 
\end{theorem}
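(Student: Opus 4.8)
The plan is to argue by contrapositive in both directions, using the rank formulas of Lemma \ref{s} together with Theorem \ref{con1} (which already guarantees that $M'_{a,b}$ is connected, i.e. has no $1$-separation, provided the splitting is non-trivial; note $3$-connectedness presupposes $2$-connectedness presupposes connectedness, so the splitting must be non-trivial). So throughout we may assume $M_{a,b}$ arises from a non-trivial splitting. A $3$-connected matroid is by definition one with no $1$-separation and no $2$-separation, so the whole content is about ruling out (or producing) a $2$-separation of $M'_{a,b}$.

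For the ``only if'' direction I would assume there is some $t\in E(M)$ such that every $np$-circuit of $M$ contains $t$, and exhibit a $2$-separation of $M'_{a,b}$. The natural candidate is $S=\{t,z\}$, $T=E\setminus\{t\}$. Since every $np$-circuit meets $t$, the set $T$ contains no $np$-circuit, so by Lemma \ref{s} $r'(T)=r(T)=r(M)$ (the last equality because $M$ is connected, hence has no coloops, so $E\setminus\{t\}$ spans). Also $r'(S)=r'(\{t\}\cup z)=r(\{t\})+1=2$ since $M$ has no loops. Thus $r'(S)+r'(T)-r'(M'_{a,b}) = 2 + r(M) - (r(M)+1) = 1 < 2$, and $|S|=2$, $|T|=|E|-1\ge 2$ (here we need $|E|\ge 3$, which holds since $M$ is $3$-connected and non-trivial splitting requires at least two distinguished elements plus an $np$-circuit). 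Hence $\{S,T\}$ is a $2$-separation of $M'_{a,b}$, so $M'_{a,b}$ is not $3$-connected.

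For the ``if'' direction, assume that for every $t\in E(M)$ there is an $np$-circuit avoiding $t$, and suppose for contradiction that $M'_{a,b}$ has a $2$-separation $\{S,T\}$, so $|S|,|T|\ge 2$ and $r'(S)+r'(T)-r'(M'_{a,b})\le 1$. Say $z\in S$. Split into cases according to $|S|$ and whether $T$ contains an $np$-circuit, mirroring the proof of Theorem \ref{con1} but with the bound $<2$ instead of $<1$. If $T$ contains no $np$-circuit, then $r'(T)=r(T)$ and $r'(S)=r((S\setminus z))+1$, so $r((S\setminus z))+r(T)-r(M)\le 1$; if moreover $|S\setminus z|\ge 2$ this is a $2$-separation of $M$ (as $|T|\ge 2$), contradicting $3$-connectedness; if $S\setminus z=\{s\}$ is a single element, then $r(\{s\})+r(T)-r(M)\le 1$ forces $r(T)\le r(M)-1+r(\{s\})$; since $M$ is connected $r(T)=r(M)$, forcing $r(\{s\})\le 1$ which is automatic, but then $r'(S)=2$ and we get $2+r(M)-(r(M)+1)=1$, so $\{s,z\}$ vs $E\setminus\{s\}$ is a genuine $2$-separation — and this is exactly the configuration the hypothesis forbids: take $t=s$; there is an $np$-circuit $C$ avoiding $s$, hence $C\subseteq T$, so $T$ does contain an $np$-circuit, contradiction. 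If instead $T$ contains an $np$-circuit, then $r'(T)=r(T)+1$ and $r'(S)=r((S\setminus z))+1$, giving $r((S\setminus z))+r(T)-r(M)\le 0$, impossible since the union of $S\setminus z$ and $T$ is all of $E$ and rank is submodular with $r(E)=r(M)$ (one gets $r((S\setminus z))+r(T)\ge r(M)$, and equality would need $S\setminus z$ and $T$ to form a $1$-separation-like split, which is ruled out by connectedness of $M$ unless $S\setminus z=\varnothing$, i.e. $S=\{z\}$, contradicting $|S|\ge 2$). This exhausts the cases, so no $2$-separation exists and $M'_{a,b}$ is $3$-connected.

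The main obstacle I anticipate is the bookkeeping in the ``if'' direction when $S\setminus z$ is small: the sub-case $S=\{s,z\}$ is precisely where the hypothesis is used, and one must be careful that the rank computation $r'(\{s\}\cup z)=2$ (needing $M$ loopless) and $r'(E\setminus\{s\})=r(M)$ (needing $M$ connected, hence no coloops) are correctly justified, and that $E\setminus\{s\}$ indeed contains a full $np$-circuit rather than merely a dependent set. A secondary point worth checking carefully is that a $2$-separation of $M'_{a,b}$ with $|S|=|T|=2$ and $|E\cup\{z\}|=4$ is not vacuously excluded — this is where the size hypotheses coming from $3$-connectedness of $M$ and non-triviality of the splitting (so $|E|\ge 3$) get used — but this is routine once the case analysis is organized as above.
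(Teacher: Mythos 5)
Your proof is correct and takes essentially the same approach as the paper's: the identical candidate $2$-separation $S=\{t,z\}$, $T=E\setminus\{t\}$ in the forward direction, and in the converse the same case analysis driven by the rank formulas of Lemma \ref{s} (splitting on whether $T$ contains an $np$-circuit and on the size of $S\setminus z$, reducing to a forbidden $1$- or $2$-separation of $M$). The only differences are organizational, plus your explicit attention to the $|S\setminus z|=1$ sub-case where the hypothesis is invoked, which the paper handles as its own Case 1.
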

\begin{proof}
	Let $M'_{a,b}$ be $3$-connected $p$-matroid. On contrary, if there is an element $t\in E(M)$ contained in every $np$-circuit of $M.$ Take $S=\{z, t\}$ and $T=E\setminus S.$ Then $r'(S)+r'(T)-r'(M'_{a,b})=r(\{t\})+1+r(T)-r(M)-1=r(\{t\})+r(T)-r(M)=1<2.$ Because, in this case, $t\in cl(T)$ hence $r(T)=r(M).$ That is $\{S, T\}$ forms a $2$-separation of $M'_{a,b},$ a contradiction.\\
	For converse part suppose, for every $t\in E(M)$ there is an $np$-circuit of $M$ not containing $t.$ On the contrary assume that  $M'_{a,b}$ is not a $3$-connected matroid. Then there exists a $k$ separation, for $k \leq 2,$ of $M'_{a,b}.$ By Theorem \ref{con1}, $k$ can not be equal to $1$. For $k=2,$ let $\{S,T\}$ be a $2$-separation of $M'_{a,b}.$ Then $\{S,T\}$ is a partition of $E\cup \{z\}$ such that $|S|, |T|\geq 2$ and 
	\begin{equation}\label{eq3}
		r'(S) + r'(T)-r'(M'_{a,b}) <2.
	\end{equation}
	$\mathbf{Case~1:}$Suppose $S=\{z,t\},$ $t\in E(M).$ By hypothesis, $T$ contains an $np$-circuit not containing $t.$ Then Equation \ref{eq3} gives, $(r(\{t\})+1)+(1+r(T))-r(M)-1 < 2$ $\implies$ $r({t})+r(T)-r(M)< 1.$ Thus $\{\{t\}, T\}$ forms a $1$-separation of $M,$ which is a contradiction.\\
	$\mathbf{Case~2:}$ Suppose $ z\in S$ and $|S|\geq 3.$ If $T$ contains no $np$-circuit then Equation \ref{eq3} yields $(r(S\setminus z)+1)+r(T)-r(M)-1 < 2 \implies$ $r(S\setminus z)+r(T)-r(M) < 2.$ Therefore $\{S\setminus z, T\}$ gives a $2$-seperation of $M,$ a contradiction.\\
	Further, if  $T$ contains an $np$-circuit, then $r'(S)=r(S\setminus z)+1,$ $r'(T)=r(T)+1.$ By Equation \ref{eq3}, we get $(r(S\setminus z)+1)+(r(T)+1)-r(M)-1 < 2 \implies$  $r(S\setminus z)+r(T)-r(M) < 1.$ Thus, $\{S\setminus z, T\}$ gives a $1$-seperation of $M,$ a contradiction. So in either case such partition does not exist. Therefore $M'_{a,b}$ is $3$-connected.
\end{proof}
\section{Applications}
\noindent For Eulerian matroid $M$ on the ground set $E$ there exists disjoint circuits $C_1,$$C_2,$ $\ldots$,$C_k$ of $M$ such that $E= C_1 \cup C_2 \cup ...\cup C_k.$
\noindent We call the collection $\{C_1,C_2,\ldots,C_k\}$ a circuit decomposition of $M.$\\

\noindent Let $\{a,b\} \subset E$. We say a circuit decomposition \~{C} $=\{C_1,C_2,\ldots,C_k\}$ of $M$ an \textit{$ep$-decomposition} of $M$ if it contains exactly one $np$-circuit with respect to the $a,b$ splitting of $M.$
In the next proposition, we give a sufficient condition to yield  Eulerian $p$-matroids from Eulerian $p$-matroids after the element splitting operation.

\begin{proposition}
	Let $M$ be Eulerian $p$-matroid and $a,b\in E$. If $M$ has an \textit{$ep$-decomposition}, then $M'_{a,b}$ is Eulerian $p$-matroid.
\end{proposition}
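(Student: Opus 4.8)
The plan is to take an $ep$-decomposition $\tilde{C} = \{C_1, C_2, \ldots, C_k\}$ of $M$, where exactly one member — say $C_1$ — is an $np$-circuit with respect to the $a,b$ splitting, and to convert it into a circuit decomposition of $M'_{a,b}$ by the minimal modification of adjoining the new element $z$ to the $np$-part. First I would recall from Lemma \ref{L1} that since $C_1$ is an $np$-circuit of $M$, the set $C_1 \cup z$ is a circuit of $M'_{a,b}$; it lies in $\mathcal{C}_z$. Next I would argue that each of the remaining circuits $C_2, \ldots, C_k$ is still a circuit of $M'_{a,b}$: being circuits of $M$ that are \emph{not} $np$-circuits (the decomposition has only one $np$-circuit, namely $C_1$), each $C_i$ for $i \geq 2$ is a $p$-circuit of $M$, hence by definition a circuit of $M_{a,b}$, and therefore a circuit of $M'_{a,b}$ by the circuit characterization $\mathcal{C}(M'_{a,b}) = \mathcal{C}(M_{a,b}) \cup \mathcal{C}_z$.

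Then I would check that $\{C_1 \cup z, C_2, \ldots, C_k\}$ is a genuine circuit decomposition of $M'_{a,b}$, i.e. that these circuits are pairwise disjoint and that their union is $E \cup \{z\}$. Disjointness is immediate: the $C_i$ were pairwise disjoint in $M$, and $z$ is a brand-new element appearing only in the first block. For the covering property, $\bigcup_{i=1}^{k} C_i = E$ by hypothesis, so $(C_1 \cup z) \cup C_2 \cup \cdots \cup C_k = E \cup \{z\} = E(M'_{a,b})$. Since $M'_{a,b}$ thus admits a partition of its ground set into circuits, it is Eulerian by definition.

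The one point that needs a little care — and which I would flag as the main obstacle, modest though it is — is confirming that circuits $C_2, \ldots, C_k$ remain \emph{circuits} (not merely dependent sets) of $M'_{a,b}$, rather than losing minimality or splitting into smaller circuits. This is exactly what the definition of a $p$-circuit guarantees: a $p$-circuit of $M$ is by construction an element of $\mathcal{C}(M_{a,b})$, and adjoining the isolated-ish column $z$ (whose only nonzero entry is in the last coordinate) cannot create a proper subcircuit inside a set avoiding $z$, nor can it destroy one, so $C_i \in \mathcal{C}(M_{a,b}) \subseteq \mathcal{C}(M'_{a,b})$. A secondary subtlety is that a circuit $C_i$ with $i \geq 2$ containing neither $a$ nor $b$ is trivially unaffected by the splitting; only those $C_i$ meeting $\{a,b\}$ require the $p$-circuit observation, and in an $ep$-decomposition every such $C_i$ other than $C_1$ is forced to be a $p$-circuit precisely because the decomposition was assumed to contain \emph{exactly one} $np$-circuit. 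Assembling these observations gives the result.
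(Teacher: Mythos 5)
Your proposal is correct and follows essentially the same route as the paper's (much terser) proof: replace the unique $np$-circuit $C_1$ by $C_1\cup z$, which is a circuit of $M'_{a,b}$ by Lemma \ref{L1}, and observe that the remaining blocks stay circuits, so the modified collection partitions $E\cup\{z\}$. Your only slip is briefly calling every $C_i$ with $i\ge 2$ a ``$p$-circuit'' (those avoiding $\{a,b\}$ are not $p$-circuits in the paper's terminology, merely circuits that survive the splitting), but you correct this yourself in the final paragraph, so the argument stands.
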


\begin{proof}
	Let \~{C} $=\{C_1,C_2,\ldots,C_k\}$ be an \textit{$ep$-decomposition} of $M$ and $C_1$ be an $np$-circuit in it.  Then $C_1 \cup z$ is a circuit of  $M'_{a,b}.$ Thus $\{C_1 \cup z, C_2, \ldots ,C_k\}$ is the desired circuit decomposition of $M'_{a,b}.$ 
	
\end{proof}

\begin{proposition}
	Let $M'_{a,b}$ is Eulerian $p$-matroid and \~{C}$=\{C_1, C_2, \ldots ,C_k\}$ be a circuit decomposition of $M'_{a,b}$. If \~{C} contains no member which is a union of an $np$-circuit and an independent set of $M,$ then $M$ is Eulerian and has an $ep$-decomposition.
	
\end{proposition}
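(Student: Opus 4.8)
The plan is to run the argument of the previous proposition in reverse: start from a circuit decomposition of $M'_{a,b}$ and massage it into an $ep$-decomposition of $M$. Let $\tilde{C}=\{C_1,C_2,\ldots,C_k\}$ be a circuit decomposition of $M'_{a,b}$, so the $C_i$ are pairwise disjoint circuits of $M'_{a,b}$ whose union is $E\cup\{z\}$. Since $z$ lies in exactly one of these sets, relabel so that $z\in C_1$. By the characterization of circuits of $M'_{a,b}$ (the Theorem following Lemma~\ref{L1}), any circuit of $M'_{a,b}$ containing $z$ lies in $\mathcal{C}_z$; hence $C_1=D\cup z$ where $D$ is an $np$-circuit of $M$. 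For $i\geq 2$ we have $z\notin C_i$, so $C_i\in\mathcal{C}(M_{a,b})$. The first key step is therefore: $\{D,C_2,\ldots,C_k\}$ is a partition of $E$ into pairwise disjoint sets, $D$ is a circuit of $M$, and each $C_i$ ($i\geq 2$) is a circuit of $M_{a,b}$.

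The second step is to upgrade each $C_i$, $i\geq 2$, to a circuit of $M$. A circuit of $M_{a,b}$ is either a $p$-circuit of $M$ (hence a circuit of $M$) or it is a circuit of $M_{a,b}$ that is not a circuit of $M$; by Corollary~2.13 of \cite{mjg} (equivalently, the rank computation in Lemma~\ref{s}), such a set, when it is not already a circuit of $M$, is a union of an $np$-circuit of $M$ and an independent set of $M$. This is exactly the configuration excluded by the hypothesis on $\tilde{C}$. Consequently every $C_i$ with $i\geq 2$ is in fact a circuit of $M$. Combined with the first step, $\{D,C_2,\ldots,C_k\}$ is then a circuit decomposition of $M$, so $M$ is Eulerian. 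Moreover $D$ is an $np$-circuit while none of $C_2,\ldots,C_k$ is an $np$-circuit (each is a genuine circuit of $M$, and a circuit of $M$ that is an $np$-circuit is in particular not a circuit of $M_{a,b}$ — but wait, we must be slightly careful here: a $p$-circuit is a circuit of $M$ that is also a circuit of $M_{a,b}$, and a circuit of $M$ contained in $C_i$ which also lies in $\mathcal{C}(M_{a,b})$ is a $p$-circuit, not an $np$-circuit). Hence $\{D,C_2,\ldots,C_k\}$ contains exactly one $np$-circuit, namely $D$, and is therefore an $ep$-decomposition of $M$.

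The step I expect to require the most care is the second one: correctly invoking the structure theorem for circuits of $M_{a,b}$ that are not circuits of $M$, and checking that the hypothesis ``$\tilde{C}$ contains no member which is a union of an $np$-circuit and an independent set of $M$'' precisely kills every bad case. One subtlety is that a single $C_i$ could conceivably be both a $p$-circuit (fine) and, under a different decomposition into an $np$-circuit plus an independent set, fall under the forbidden pattern; but since $C_i$ is a circuit of $M_{a,b}$, if it were also a proper union $C'\cup I'$ with $C'$ an $np$-circuit, then $C'\cup z$ would be a circuit of $M'_{a,b}$ strictly inside $C_i$, contradicting that $C_i$ is a circuit of $M'_{a,b}$ — so in fact such $C_i$ (for $i\geq 2$, where $z\notin C_i$) are automatically genuine circuits of $M$ and the hypothesis is only needed to exclude the case where $C_i$ itself is not a circuit of $M$ at all. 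Once this bookkeeping is done, the conclusion that $M$ is Eulerian with an $ep$-decomposition is immediate, and the proof is complete.
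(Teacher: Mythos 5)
Your overall strategy is the same as the paper's: isolate the block containing $z$, write it as $D\cup z$ with $D$ an $np$-circuit of $M$ via Lemma~\ref{L1}, argue that the remaining blocks are circuits of $M$, and conclude that $\{D,C_2,\ldots,C_k\}$ is an $ep$-decomposition. The gap is in your second step. You assert that every circuit of $M_{a,b}$ which is not a circuit of $M$ is a union of an $np$-circuit and an independent set of $M$, citing Corollary~2.13 of \cite{mjg} (equivalently the rank formula of Lemma~\ref{s}). That result concerns the rank function, not circuit structure, and the blanket dichotomy you need is false: a circuit of $M_{a,b}$ containing \emph{both} $a$ and $b$ can be a minimal union of two \emph{disjoint} $np$-circuits, one through $a$ and one through $b$ (e.g.\ two disjoint triangles through $a$ and $b$ in a graphic matroid; one checks directly from the construction of $A_{a,b}$ that such a union is a circuit of $M_{a,b}$ when it contains no further circuit of $M$). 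A disjoint union of two circuits of $M$ cannot be written as a single $np$-circuit union an independent set, so the hypothesis on \~{C} does not exclude such a block, and your claim that every $C_i$ with $i\geq 2$ is a circuit of $M$ does not follow from what you have written.

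The missing idea --- and the pivot of the paper's proof --- is that this bad case cannot occur among $C_2,\ldots,C_k$ because $a$ and $b$ are already accounted for by $D$. Since $D$ is an $np$-circuit it contains at least one of $a,b$. If it contained only one, say $a$, then $b$ would lie in some $C_i$ with $i\geq 2$, and that $C_i$ would be a circuit of $M_{a,b}$ containing exactly one of $a,b$; it is to circuits of \emph{this} type that Theorem~2.10 of \cite{mjg} applies, forcing $C_i$ to be a union of an $np$-circuit and an independent set and contradicting the hypothesis. Hence $D$ contains both $a$ and $b$, so every $C_i$ with $i\geq 2$ avoids both; such sets have identical dependencies in $M$ and $M_{a,b}$ (the appended row vanishes off the columns $a,b$), so each is a circuit of $M$, and the rest of your bookkeeping goes through. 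A minor slip in your last paragraph: $C'\cup z$ cannot lie ``strictly inside $C_i$'' since $z\notin C_i$; the correct observation is simply that a circuit $C'$ of $M$ contained in a circuit $C_i$ of $M$ must equal $C_i$, and $C_i$ cannot be both a $p$-circuit and an $np$-circuit.
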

\begin{proof}
	Assume, without loss of generality, $z \in C_1.$ Then $C_1 \in \mathcal{C}_z$ and $C_1 \setminus z$ is an $np$-circuit of $M.$ We will show $C_1 \setminus z$ contains both $a$ and $b.$ On the contrary assume that $C_1 \setminus z$ contains only $a.$ Then $b \in C_i$ for some $i \in \{2,3,\ldots,k\}.$ Since $C_i$ is also a circuit of $M_{a,b}$ containing only $b,$ by Theorem 2.10 of \cite{mjg} it must be a union of an $np$-circuit and an independent set of $M,$ which is a contradiction to the hypothesis. Therefore $C_1 \setminus z$ contains both  $a$ and $b$ and the collection $\{C_1\setminus z, C_2, \ldots ,C_k\}$ forms an $ep$-decomposition of $M.$
	
\end{proof}

\noindent In Example \ref{ex1}, the matroid $R_8$ is Eulerian with $ep$-decomposition $E=C_1 \cup C_2,$ where $C_1 =\{2,4,6,8\}$ is a $p$-circuit and $C_2=\{1,3,5,7\}$ is an $np$-circuit. An element splitting matroid $M'_{3,5}$ is also Eulerian with circuit decomposition $E \cup z=C_1 \cup (C_2 \cup z).$

\noindent M. Borowiecki \cite{bro} defined hamiltonian matroid as a matroid containing a circuit of size $r(M)+1.$ This circuit is called the hamiltonian circuit of the matroid $M.$ In the next corollary, we give a sufficient condition to yield hamiltonian matroid from hamiltonian matroid after the element splitting operation.

\begin{corollary}
If $M$ is hamiltonian matroid with an $np$-circuit of size $r(M)+1,$ then $M'_{a,b}$ is hamiltonian.
\end{corollary}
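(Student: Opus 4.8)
The plan is to exhibit explicitly a circuit of $M'_{a,b}$ of size $r'(M'_{a,b})+1$, using the hamiltonian $np$-circuit of $M$ as raw material. First I would recall from the Remark that $r'(M'_{a,b}) = r(M)+1$, so a hamiltonian circuit of $M'_{a,b}$ must have size $r(M)+2$. Let $C$ be the given $np$-circuit of $M$ with $|C| = r(M)+1$. By Lemma \ref{L1}, since $C$ is an $np$-circuit, the set $C \cup z$ is a circuit of $M'_{a,b}$. Its size is $|C| + 1 = r(M)+2 = r'(M'_{a,b})+1$, which is exactly the size required for a hamiltonian circuit.

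The remaining point to address is that $M'_{a,b}$ genuinely has rank $r(M)+1$ and that $z \notin C$, so that the cardinality count $|C \cup z| = |C|+1$ is legitimate; both are immediate, the former from the Remark and the latter because $z$ is a newly introduced element not in $E(M) \supseteq C$. I would also note in passing that the existence of an $np$-circuit forces the underlying splitting to be non-trivial, so this corollary sits naturally alongside Theorem \ref{con1}; one could remark that if in addition $M$ is connected, then $M'_{a,b}$ is a connected hamiltonian $p$-matroid.

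I do not anticipate a genuine obstacle here: the corollary is an essentially immediate consequence of Lemma \ref{L1} together with the rank formula in the Remark, so the proof is a two-line verification. The only thing worth being careful about is not to conflate "hamiltonian circuit of $M$" (size $r(M)+1$) with "hamiltonian circuit of $M'_{a,b}$" (size $r(M)+2$); writing the rank bookkeeping out explicitly avoids that pitfall. I would therefore keep the proof short: invoke Lemma \ref{L1} to get that $C \cup z \in \mathcal{C}(M'_{a,b})$, then compute $|C \cup z| = (r(M)+1)+1 = r'(M'_{a,b})+1$ to conclude that $C \cup z$ is a hamiltonian circuit of $M'_{a,b}$.
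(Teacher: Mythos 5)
Your proof is correct and follows exactly the paper's argument: apply Lemma \ref{L1} to the hamiltonian $np$-circuit $C$ to obtain the circuit $C\cup z$ of $M'_{a,b}$ of size $r(M)+2 = r'(M'_{a,b})+1$. The extra bookkeeping you include (that $z\notin C$ and that $r'(M'_{a,b})=r(M)+1$) is a welcome clarification but does not change the route.
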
 
\begin{proof}
Let $C$ be an $np$-circuit of $M$ of size $r(M)+1.$ Then by Proposition \ref{L1}, $C\cup z$ is a circuit in $M'_{a,b}$ of size $r(M)+2.$
\end{proof}
\noindent In Example \ref{ex1}, the matroid $R_8 \cong M[A]$ is hamiltonian and its element splitting matroid $M'_{3,5} \cong M[A'_{3,5}]$ is also hamiltonian.\\

\noindent Rota conjectured that the family of matroids that are representable over finite fields has only finitely many excluded minors \cite{GGW1}. For example, the 4-point line, $U_{2,4},$ is the only excluded minor for the class of binary matroids. In the following example, we demonstrate that there exist a splitting of the ternary matroid $U_{2,4},$ which yields a graphic matroid.

\begin{example}
	Let the matrix $A$ represents the ternary matroid $U_{2,4}$ and the vector matroid of $A_{1, 3}$ represents the splitting matroid $M[A_{1,3}]$.\\
	$\mathbf{A} = 
	\begin{pNiceMatrix}%
		[first-col,
		first-row,
		code-for-first-col = \color{black},
		code-for-first-row = \color{black}]
		& 1 & 2 & 3 & 4  \\
		& 1 & 0 & 1 & 1  \\
		& 0 & 1 & 1 & 2  \\
	\end{pNiceMatrix}\qquad
	\mathbf{A_{1,3}} = 
	\begin{pNiceMatrix}%
		[first-col,
		first-row,
		code-for-first-col = \color{black},
		code-for-first-row = \color{black}]
		& 1 & 2 & 3 & 4  \\
		& 1 & 0 & 1 & 1  \\
		& 0 & 1 & 1 & 2  \\
		& 1 & 0 & 1 & 0
	\end{pNiceMatrix} \qquad
\mathbf{A'_{1,3}} = 
\begin{pNiceMatrix}%
[first-col,
first-row,
code-for-first-col = \color{black},
code-for-first-row = \color{black}]
& 1 & 2 & 3 & 4 & 5  \\
& 1 & 0 & 1 & 1 & 0 \\
& 0 & 1 & 1 & 2 & 0 \\
& 1 & 0 & 1 & 0 & 1
\end{pNiceMatrix}$. \\
Observe that
\begin{itemize}
	\item the splitting matroid $M[A_{1,3}]$ is binary and matrix $B  = \begin{pNiceMatrix}% 
		
		1 & 0 & 1 & 1  \\
		0 & 1 & 1 & 0  \\
		0 & 0 & 0 & 1
	\end{pNiceMatrix}$  
	gives its binary representation. 
	\item $A'_{1,3} /5 = U_{2,4}.$
	
\end{itemize} 

\end{example}

\noindent  However, the element splitting operation on $U_{2, 4}$ does not give a binary matroid. With this observation, we propose the following question:

 For a given ternary matroid $M,$ does there always exist a pair of elements $\{a, b\}$ in $E(M)$ such that the splitting matroid $M_{a, b}$ is binary (graphic)?

\noindent \textbf{Funding Details:}\\
The Authors received no financial support for this work.\\
\noindent \textbf{Conflict of Interest:}\\
The authors report there are no competing interests to declare.

\end{document}